\newcommand{\R}{\mathbb{R}}
\newcommand{\Leb}{\mathcal{L}}
\newcommand{\C}{\mathcal{C}}
\newtheorem{thm}{Theorem}[section]
\newtheorem{rmk}[thm]{Remark}
\newtheorem{lem}[thm]{Lemma}
\newtheorem{defn}[thm]{Definition}
\title{On the Measure of the Midpoints of the Cantor Set in $\R$}
\author[1]{Enrique Alvarado\thanks{enrique.alvarado@wsu.edu}}
\author[1]{Yunfeng Hu\thanks{yunfeng.hu@wsu.edu}}
\affil[1]{Department of Mathematics and Statistics, Washington State University}
\begin{document}
\maketitle

\begin{abstract}
In this paper, we are going to discuss the following problem: Let $T$ be a fixed set in $\R^n$, and let $S$ and $B$ be two subsets in $\R^n$ such that for any $x$ in $S$, there exists an $r$ such that $x+ r T$ is a subset of $B$. How small can $B$ be if we know the size of $S$? Stein proved that for $n$ greater than or equal to 3 and $T$ is a sphere centered at origin, then $S$ having positive measure implies $B$ has positive measure by using spherical maximal operator. Later, Bourgain and Marstrand proved the similar result for $n =2$. Here we will show an example for why the result fails for $n=1$. 
\end{abstract}

\section{Introduction}

The problem in the abstract is included in the paper by Tam\'{a}s Keleti \cite{tamas-2017-unions}. The purpose of this paper is to construct a counterexample in $\mathbb{R}$ for the following theorem which holds for dimensions greater than $1$. 

\begin{thm}\label{Problem} Let $S\subset \mathbb{R}^n$ $(n\geq 2)$ be a set of positive Lebesgue measure. If $B\subset \mathbb{R}^n$ contains a sphere centered at every point of $S$, then $B$ has positive Lebesgue measure. 
\end{thm}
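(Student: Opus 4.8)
The plan is to prove Theorem~\ref{Problem} by the classical argument via the spherical maximal operator, which is the circle of ideas the abstract alludes to. For locally integrable $f\colon\mathbb{R}^n\to\mathbb{R}$ and $r>0$ write $A_rf(x)=\int_{S^{n-1}}f(x+r\omega)\,d\sigma(\omega)$ for the spherical mean, where $\sigma$ is the normalized surface measure on the unit sphere, and set $Mf(x)=\sup_{r>0}|A_rf(x)|$. The one non-elementary ingredient, which we may treat as given, is the Stein--Bourgain maximal theorem: for $n\ge 2$ and every $p>\tfrac{n}{n-1}$ there is a constant $C=C(n,p)$ with $\|Mf\|_{L^p(\mathbb{R}^n)}\le C\|f\|_{L^p(\mathbb{R}^n)}$ (Stein for $n\ge 3$, Bourgain for $n=2$). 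Fix one such exponent $p$ once and for all.

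The first step is to convert the geometric hypothesis into a pointwise lower bound. For each $x\in S$ pick $r_x>0$ with $x+r_xS^{n-1}\subseteq B$; then $\mathbf 1_B(x+r_x\omega)=1$ for every $\omega\in S^{n-1}$, so $A_{r_x}\mathbf 1_B(x)=1$ and hence $M\mathbf 1_B(x)\ge 1$ for all $x\in S$. Now suppose, toward a contradiction, that $|B|=0$. We cannot feed $\mathbf 1_B$ directly into the maximal inequality --- spheres are Lebesgue-null, so $\mathbf 1_B$ agrees with $0$ almost everywhere while its spherical means do not vanish, and $M$ is not well behaved on the a.e.\ equivalence class --- so instead, given $\varepsilon>0$, choose an open set $U\supseteq B$ with $|U|<\varepsilon$. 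Then $\mathbf 1_U\ge\mathbf 1_B$ gives $M\mathbf 1_U(x)\ge 1$ on $S$, while lower semicontinuity of $\mathbf 1_U$ makes each $A_r\mathbf 1_U$, and hence $M\mathbf 1_U$, lower semicontinuous; in particular $M\mathbf 1_U$ is measurable (a countable supremum over rational radii) and $\{M\mathbf 1_U\ge 1\}$ is a measurable set. Since $|U|<\infty$ we have $\mathbf 1_U\in L^p$, so the maximal theorem yields $\|M\mathbf 1_U\|_p\le C\|\mathbf 1_U\|_p=C|U|^{1/p}$, and Chebyshev's inequality gives
$$|S|\le\bigl|\{M\mathbf 1_U\ge 1\}\bigr|\le\|M\mathbf 1_U\|_p^{\,p}\le C^p|U|<C^p\varepsilon.$$
Letting $\varepsilon\to 0$ forces $|S|=0$, contradicting $|S|>0$; hence $|B|>0$. (Running the same estimate with $|U|\to|B|$ in place of $\varepsilon\to 0$ even gives the sharper bound $|S|\le C^p|B|$, but the qualitative conclusion is all the theorem asks for.)

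The genuine difficulty is concentrated entirely in the maximal theorem itself --- Stein's proof rests on the decay $|\widehat{d\sigma}(\xi)|\lesssim|\xi|^{-(n-1)/2}$ of the surface-measure Fourier transform together with analytic interpolation across the family of complex spherical means, and Bourgain's $n=2$ case is harder still --- but that is precisely the input we are permitted to assume. Inside the argument above, the only point that is not soft measure theory is the reduction from $\mathbf 1_B$ to the open neighborhood $\mathbf 1_U$ and the accompanying semicontinuity bookkeeping that legitimizes ``$\|M\mathbf 1_U\|_p\le C\|\mathbf 1_U\|_p$'' for the merely lower semicontinuous (not continuous) function $\mathbf 1_U$; that is the step I would write out most carefully, everything else following from Chebyshev's inequality and a limit.
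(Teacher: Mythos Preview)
Your sketch via the Stein--Bourgain spherical maximal theorem is the standard deduction and is correct as written (including the care you take in passing to an open neighborhood $U\supseteq B$ to sidestep the fact that spherical means do not respect a.e.\ equivalence classes). There is nothing to compare it against, however: the paper does not prove Theorem~\ref{Problem} at all. It is quoted in the introduction purely as background, with the cases $n\ge 3$ and $n=2$ attributed to Stein \cite{stein-1976-means} and to Bourgain \cite{bourgain-1986-averages} and Marstrand \cite{marstrand-1987-packing} respectively; the paper's own work is the construction of a counterexample in dimension $n=1$ (Theorem~\ref{existence}), showing that the statement fails there. So your proposal is a valid proof of a result the paper only cites.
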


In 1976, Ellias Stein \cite{stein-1976-means} proved the case for when $n \geq 3$. Then in 1987 and 1986, Marstrand \cite{marstrand-1987-packing} and Bourgain \cite{bourgain-1986-averages} independently proved the theorem for dimension $n = 2$. However, the theorem is not true for $n=1.$\\

\noindent$\mathbf{Idea:}$ To construct an counterexample, the idea is to start with a set $B$ with zero length, and then construct a set $S$ with positive measure made up of all midpoints from $B$. Notice that if $S$ contains all the midpoints from $B$, then for any point $z\in S$, it will be the midpoint of a pair of points $x, y$ in $B$. And $\{x,y\}$ will be a sphere centered at $z\in S$ .\\

\begin{defn}[$1/3$-Cantor set]\label{cantorset}
Let $C_1 = [0,1]\backslash\left(\frac{1}{3}, \frac{2}{3}\right)$, and 
\begin{align*}
C_i = \frac{1}{3}C_{i-1} \cup \left(\frac{2}{3} + \frac{1}{3}C_{i-1}\right),\ \forall i\geq 2. 
\end{align*}
We define the $1/3$-Cantor set to be $\C = \bigcap_{i=1}^n C_i.$
\end{defn}

The 1/3-Cantor set $\mathcal{C}$ will take place of $B$ in the theorem, and the midpoints from $\mathcal{C}$, denoted as $M_\mathcal{C}$, will take place of $S$. We define the {\bf{midpoint set}} of $A$ in $\mathbb{R}$ as 
\begin{align*}
M_A := \left\{\frac{x + y}{2} : x, y \in A, x\neq y\right\}.
\end{align*}

Since $\C = \cap_{i=1}^\infty C_i$, to get $\mathcal{L}^1(M_\mathcal{C}) > 0$ we will show that $M_{C_i} = (0,1)$ for all $i$. After that, we then prove 

\begin{align*}
M_{\C} = M_{\cap_{n=1}^\infty C_n} = \bigcap_{n=1}^\infty M_{C_n} = (0,1).
\end{align*}

Therefore, by setting $B = \C$ and $S = M_{\C}$, we will have $\Leb(S) = 1$ while $\Leb(B) =0$, which will be our counterexample. \\

As a matter of fact, with the same construction, we may make $M_\mathcal{C}$ as large as we want by making copies of $\mathcal{C}$ in other intervals!

\section{A counterexample in $\R$}

\begin{thm}\label{existence}
There exist $S, B\subset \R$ and $B$ contains a sphere (in $\R$) centered at every point of $S$ such that $\Leb(S) >0$ but $\Leb(B)=0.$ 
\end{thm}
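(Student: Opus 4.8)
The plan is to take $B = \C$, the $1/3$-Cantor set of Definition~\ref{cantorset}, and $S = M_\C$, so that the statement reduces to two facts: $\Leb(\C)=0$ and $\Leb(M_\C)>0$. The first is immediate, since $\C\subseteq C_n$ and $C_n$ is a union of $2^n$ intervals of length $3^{-n}$, hence $\Leb(\C)\le (2/3)^n$ for all $n$. Once the second is known the theorem follows: for every $z\in M_\C$ there are $x\ne y$ in $\C=B$ with $z=(x+y)/2$, and then the two-point set $\{x,y\}\subseteq B$ is a sphere in $\R$ centered at $z$, of radius $|x-y|/2>0$, so $B$ contains a sphere centered at every point of $S$.

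To obtain $\Leb(M_\C)>0$ I would not try to compute $M_\C$ exactly; it suffices to prove $M_\C\supseteq (0,1)\setminus\C$, which already forces $\Leb(M_\C)\ge \Leb\big((0,1)\setminus\C\big)=1$. The key input is the classical identity $\C+\C=[0,2]$. First I would establish it using the ternary description $\C=\{\sum_{i\ge 1}a_i3^{-i}:a_i\in\{0,2\}\}$: for $t\in[0,2]$, expand $t/2\in[0,1]$ as $\sum_{i\ge 1}e_i3^{-i}$ with $e_i\in\{0,1,2\}$, write each $e_i=d_i+d_i'$ with $d_i,d_i'\in\{0,1\}$, and set $x=\sum 2d_i3^{-i}$, $y=\sum 2d_i'3^{-i}$; then $x,y\in\C$ and $x+y=\sum 2e_i3^{-i}=t$. (The same bookkeeping gives $C_n+C_n=[0,2]$ for every $n$, one route to the intermediate fact $M_{C_n}=(0,1)$ mentioned in the introduction; a direct nested-interval argument also works.)

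Granting $\C+\C=[0,2]$, the proof finishes quickly: for $z\in(0,1)\setminus\C$ we have $2z\in(0,2)\subseteq\C+\C$, so $2z=x+y$ with $x,y\in\C$; we cannot have $x=y$, since that would give $x=z\in\C$, contradicting $z\notin\C$; hence $x\ne y$ and $z=(x+y)/2\in M_\C$. Therefore $M_\C\supseteq(0,1)\setminus\C$, so $\Leb(S)\ge 1>0=\Leb(B)$, completing the argument.

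The step I expect to be delicate is exactly the one that the restriction $z\notin\C$ sidesteps. It is tempting to argue via $M_\C=\bigcap_n M_{C_n}$, but this holds only as the inclusion $M_\C\subseteq\bigcap_n M_{C_n}$: a compactness argument converts midpoint witnesses $x_n\ne y_n$ in $C_n$ with $(x_n+y_n)/2=z$ into a limit pair $x,y\in\C$ with $(x+y)/2=z$, yet the witnesses may collapse onto $z$, leaving only $x=y=z$ and no genuine sphere --- precisely what happens at $z=1/4\in\C$, for which the only way to write $1/2=x+y$ with $x,y\in\C$ is $x=y=1/4$. Organizing the proof around the sumset $\C+\C$ and discarding the null set $\C$ avoids this difficulty, which is why I would take that route rather than passing to a limit of the sets $M_{C_n}$.
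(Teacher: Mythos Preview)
Your argument is correct and takes a genuinely different route from the paper. The paper proceeds in two stages: an induction on $i$ showing $M_{C_i}=(0,1)$ for every $i$ (using the scaling Lemma~\ref{scalingproperty} and the partition Lemma~\ref{indexesproperty}), followed by a compactness/monotone-limit step to pass from $\bigcap_i M_{C_i}$ to $M_\C$ and conclude $M_\C=(0,1)$. You instead invoke the sumset identity $\C+\C=[0,2]$ via ternary expansions and deduce directly that $M_\C\supseteq(0,1)\setminus\C$, which already has full measure. Your route is shorter and more elementary; the paper's route yields the finer information $M_{C_n}=(0,1)$ at every finite stage.

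Your closing caution is well placed and in fact identifies a genuine oversight in the paper. Both of the paper's limiting arguments (the monotone sequence $x_n^0$ in Claim~3 of the analytic proof and the subsequential limit in Theorem~\ref{compactness}) produce a pair $x,y\in\C$ with $(x+y)/2=z$ but never verify $x\neq y$, which the definition of $M_\C$ requires. Your example $z=1/4$ shows this can actually fail: forcing ternary digits one sees that the \emph{only} solution of $x+y=1/2$ with $x,y\in\C$ is $x=y=1/4$, so $1/4\notin M_\C$ and the paper's stronger assertion $M_\C=(0,1)$ is not quite correct. Your device of discarding the null set $\C$ from $(0,1)$ repairs this cleanly and is exactly what is needed for Theorem~\ref{existence}; the same fix would patch the paper's argument as well, since if $z\notin\C$ then any limit pair with $(x+y)/2=z$ automatically has $x\neq y$.
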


In subsection 2.1, we will go over a few definitions and a couple of lemmas which we will use for the proofs of Theorem \ref{existence}. We say proofs, because we will go over two distinct proofs; in subsection 2.2 we will go over an analytic induction proof and in subsection 2.3 we will provide a more geometric argument. 

\subsection{Definitions and lemmas}

\begin{rmk}
In $\R$, a sphere of radius $r$ centered at a point $x$ will be two points $x-r$ and $x+r.$
\end{rmk}

\begin{defn}[$i$th Cantor partition set for $\lbrack0,1\rbrack$]\label{indexesset}
Define 
\begin{align*}
P_i = \left\{P^k_{C_i}:  P^k_{C_i} = \left(\frac{k-1}{3^i}, \frac{k}{3^i}\right),\ k\in \{1,2,3,\cdots, 3^i\} \right\}
\end{align*}

\noindent to be the $i${\text{th}} Cantor partition set of $(0,1).$ 
\end{defn}

\begin{figure}[H]\label{1/3 Cantor Partition}
\begin{center}
\includegraphics{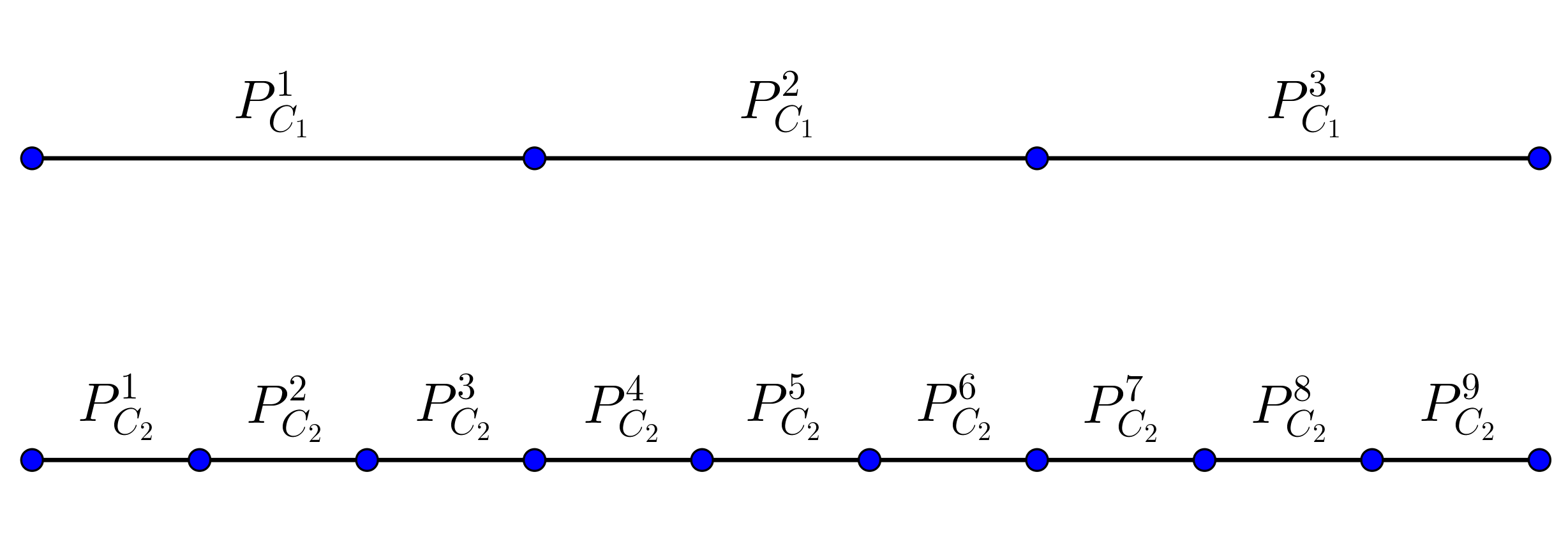}
\end{center}
\caption{1st and 2nd Cantor partition sets of $(0,1)$.}
\end{figure}

In order to prove Theorem \ref{existence}, let's first prove the following lemmas:

\begin{lem}[Average properties of indexes set]\label{indexesproperty}
If $s = \frac{1}{2}(m+n)$, then $P^s_{C_i} \subset M_{P^m_{C_i} \cup P^n_{C_i}}$.
\end{lem}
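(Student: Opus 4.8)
The plan is to establish the containment by an explicit computation of the arithmetic (Minkowski) average of the two open intervals $P^m_{C_i}$ and $P^n_{C_i}$. Recall from Definition~\ref{indexesset} that $P^k_{C_i} = \left(\frac{k-1}{3^i},\frac{k}{3^i}\right)$; I will read $P^s_{C_i}$ through the same formula even when $s=\frac12(m+n)$ is only a half-integer. The key observation is that the set of midpoints obtained by taking one point from $P^m_{C_i}$ and one from $P^n_{C_i}$ is itself an open interval, and a short arithmetic check should show that this interval is exactly $P^s_{C_i}$.

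Concretely, first I would treat the main case $m\neq n$. Here the intervals $P^m_{C_i}$ and $P^n_{C_i}$ are disjoint, so every pair $x\in P^m_{C_i}$, $y\in P^n_{C_i}$ automatically has $x\neq y$, and hence $\frac{x+y}{2}\in M_{P^m_{C_i}\cup P^n_{C_i}}$. Thus it suffices to verify
\begin{align*}
\left\{\frac{x+y}{2} : x\in P^m_{C_i},\ y\in P^n_{C_i}\right\} = \left(\frac{(m-1)+(n-1)}{2\cdot 3^i},\ \frac{m+n}{2\cdot 3^i}\right) = \left(\frac{s-1}{3^i},\frac{s}{3^i}\right) = P^s_{C_i},
\end{align*}
where the first equality is the standard fact that the average of two open intervals is the open interval between the averaged endpoints, and the middle equality just uses $2s=m+n$. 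This already gives $P^s_{C_i}\subset M_{P^m_{C_i}\cup P^n_{C_i}}$.

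The last thing to check is the degenerate case $m=n$, where $s=m$ and the claim reduces to $P^m_{C_i}\subset M_{P^m_{C_i}}$: given $z\in P^m_{C_i}$, choose $\varepsilon>0$ small enough that $z-\varepsilon$ and $z+\varepsilon$ both lie in the open interval $P^m_{C_i}$, so that $z=\frac12\big((z-\varepsilon)+(z+\varepsilon)\big)$ exhibits $z$ as a midpoint of two distinct points of $P^m_{C_i}$. I do not anticipate any real obstacle here; the only mild subtleties are keeping track of the distinctness requirement $x\neq y$ in the definition of $M_A$ and agreeing on how to interpret $P^s_{C_i}$ for half-integer $s$, both of which are dispatched above.
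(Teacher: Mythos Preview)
Your argument is correct and is essentially the same idea as the paper's proof, just packaged differently. The paper fixes $z\in P^s_{C_i}$ and explicitly produces a \emph{symmetric} pair $z-\delta\in P^m_{C_i}$, $z+\delta\in P^n_{C_i}$ by checking the four endpoint inequalities that guarantee such a $\delta$ exists; you instead compute the full Minkowski average $\tfrac12\big(P^m_{C_i}+P^n_{C_i}\big)$ in one stroke and observe it equals $P^s_{C_i}$. Both arguments rest on the same arithmetic with the endpoints $(k-1)/3^i$ and $k/3^i$ and the identity $m+n=2s$.

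Your write-up is in fact a bit more careful than the paper's: the paper silently assumes $m<s<n$ (hence $m\neq n$) and never says what $P^s_{C_i}$ should mean when $s$ is a half-integer, whereas you flag both issues and dispose of the degenerate case $m=n$ separately. None of this changes the mathematics, but it does make your version cleaner.
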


\begin{proof}
Without the loss of generality, let's assume that $m<s<n.$ Denote $x_l, x_r, y_l, y_r$ to be left and right end points for $P^m_{C_i}$ and $P^n_{C_i}$ respectively. Take any point $z\in P^s_{C_i},$ there exists a $\delta(z)$ such that 
\begin{align*}
\max\{z- x_r, y_l -z\} < \delta(z) < \min\{z - x_l, y_r -z\}.
\end{align*}
And 
\begin{align*}
x_l &= z - (z-x_l)< z - \delta(z) < z- (z-x_r) = x_r\\
y_l & = z + (y_l -z) < z + \delta(z) < z + (y_r -z) = y_r.
\end{align*}
therefore $z-\delta(z) \in P^m_{C_i}$ and $z+\delta(z)\in P^n_{C_i}$, and 

$$z = \frac{1}{2}((z-\delta(z)) + (z+\delta(z)))\in M_{P^m_{C_i} \cup P^n_{C_i}}.$$ Hence $P^s_{C_i} \subset M_{P^m_{C_i} \cup P^n_{C_i}}$.
\end{proof}

\begin{lem}[Scaling and translation property of the average set]\label{scalingproperty}
If $S = c_1 T + c_2$ ($c_1\neq 0$) and $M_T = (a,b)$, then $M_S = c_1 (a,b) + c_2.$
\end{lem}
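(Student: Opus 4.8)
The plan is to exploit the fact that the affine map $\phi\colon\R\to\R$, $\phi(t)=c_1 t+c_2$, is a bijection (because $c_1\neq 0$) and that it intertwines the midpoint operation with itself. First I would observe that, since $S=c_1 T+c_2=\phi(T)$, every pair of points of $S$ has the form $\phi(u),\phi(v)$ with $u,v\in T$, and conversely; moreover $\phi(u)\neq\phi(v)$ if and only if $u\neq v$, since $\phi$ is injective. This equivalence is the only place where the hypothesis $c_1\neq 0$ is actually used.

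Next I would record the elementary identity
$$\frac{\phi(u)+\phi(v)}{2}=\frac{(c_1 u+c_2)+(c_1 v+c_2)}{2}=c_1\cdot\frac{u+v}{2}+c_2=\phi\!\left(\frac{u+v}{2}\right).$$
Combining this with the bijection from the previous step, a point $z$ lies in $M_S$ if and only if $z=\phi(w)$ for some $w\in M_T$; that is, $M_S=\phi(M_T)=c_1 M_T+c_2$. Substituting the hypothesis $M_T=(a,b)$ then gives $M_S=c_1(a,b)+c_2$.

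There is essentially no obstacle here: the argument is a one-line computation wrapped in the observation that an injective affine map carries midpoints to midpoints. The only points deserving a word of care are the equivalence $x\neq y\iff u\neq v$ noted above, and the bookkeeping of orientation, namely that $c_1(a,b)+c_2$ denotes the image of the open interval under $\phi$, which is the open interval $(c_1 a+c_2,\,c_1 b+c_2)$ when $c_1>0$ and $(c_1 b+c_2,\,c_1 a+c_2)$ when $c_1<0$; in either case it is again an open interval, which is all that is needed downstream.
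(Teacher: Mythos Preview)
Your argument is correct and is essentially the same as the paper's: both compute $\frac{(c_1u+c_2)+(c_1v+c_2)}{2}=c_1\cdot\frac{u+v}{2}+c_2$ and use invertibility of $t\mapsto c_1t+c_2$ to obtain both inclusions, the only difference being that the paper writes out the two containments by explicit element-chasing while you package them via the bijection $\phi$. Your version is in fact slightly more careful, since you explicitly track the condition $x\neq y$ from the definition of $M_A$ and the orientation of $c_1(a,b)+c_2$ when $c_1<0$, points the paper passes over in silence.
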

\begin{proof}
Pick any point $z_s \in M_S$, there exist $x_s, y_s\in S$ such that $z_s = \frac{1}{2}(x_s + y_s).$ Since $S = c_1 T + c_2$, there exist $x_t, y_t\in T$ such that 
\begin{align*}
x_s = c_1 x_t + c_2,\ and\ y_s = c_1 y_t + c_2. 
\end{align*}
Therefore
\begin{align*}
z_s &= \frac{1}{2} (x_s + y_s )\\
& = \frac{1}{2}((c_1 x_t + c_2) + (c_1 y_t + c_2))\\
& = c_1 \cdot \frac{1}{2}(x_t +y_t) + c_2\\
& = c_1 z_t + c_2,
\end{align*}

\noindent where $z_t = \frac{1}{2}(x_t + y_t) \in M_T$. So $z_s \in c_1 M_T + c_2$ and then $M_S \subset c_1 M_T + c_2.$

On the other hand, pick any point $z_t \in M_T$, there exists a $x_t, y_t \in M_T$ such that $z_t = \frac{1}{2}(x_t + y_t).$ As $S = c_1 T + c_2$, $T = \frac{1}{c_1}(S- c_2).$ So there exist $x_s, y_s\in S$ such that 
\begin{align*}
x_t = \frac{1}{c_1} (x_s - c_2),\ and\ y_t =\frac{1}{c_1}(y_s - c_2).
\end{align*}
Therefore
\begin{align*}
z_t &= \frac{1}{2} (x_t + y_t) \\
& = \frac{1}{2} \left(\frac{1}{c_1} (x_s - c_2) + \frac{1}{c_1}(y_s - c_2)\right)\\
& = \frac{1}{c_1} \left(\frac{1}{2} (x_s + y_s) - c_2\right) \\
& = \frac{1}{c_1} (z_s - c_2),
\end{align*}
\noindent where $z_s = \frac{1}{2} (x_s + y_s)\in M_S.$ So $z_t \in \frac{1}{c_1} (M_S - c_2)$ and then $M_T\subset \frac{1}{c_1}(M_S - c_2)$, i.e. $c_1 M_T + c_2 \subset M_S$.  Hence $M_S = c_1 M_T + c_2.$
\end{proof}

\subsection{An analytic argument}

Now we will use the lemmas above to prove Theorem \ref{existence}.
\begin{proof}

\noindent {\bf{Claim 1:}} $M_{C_1} = (0,1).$\\
Proof of Claim 1: Let $P^1_{C_1} = \left(0, \frac{1}{3}\right)$, $P^2_{C_1} = \left(\frac{1}{3}, \frac{2}{3}\right)$ and $P^3_{C_1} = \left(\frac{2}{3}, 1\right)$ be the $1$st Cantor partition set in Definition \ref{indexesset}.
\begin{itemize}
\item[(i)] For any $z\in P^1_{C_1}$, since $P^1_{C_1}$ is open, there exists an open interval $I_z$ centered at $z$ such that $I_z \subset P^1_{C_1}$ . Therefore, $P^1_{C_1} \in M_{C_1}$. Similarly, $P^3_{C_1}\in M_{C_1}.$

\item[(ii)] For any point $z\in P^2_{C_1},$ without the loss of generality, $\frac{1}{3}< z\leq \frac{1}{2}$ (i.e. the other half $\frac{1}{2}\leq z < \frac{2}{3}$ can be solved by symmetry), there exists a $\delta(z)$ satisfying $\frac{1}{3} < \delta(z) < z$ such that 

\begin{align*}
0&< z - \delta(z) < \frac{2}{3} - \delta(z) <\frac{1}{3}\\
\frac{2}{3} &< z+ \delta(z) < z + z < 1
\end{align*} 

Therefore $z-\delta(z) \in P^1_{C_1}\subset C_1$ and $z+\delta(z) \in P^3_{C_1}\subset C_1$, and $$z = \frac{1}{2} ((z+\delta(z))+ (z-\delta(z))) \in M_{C_1}.$$ Hence $P^2_{C_1}\subset M_{C_1}.$

\item[(iii)] Since 
\begin{align*}
\frac{1}{3} = \frac{1}{2}\left(0+\frac{2}{3}\right),\ \frac{2}{3} = \left(\frac{1}{3} + 1\right),
\end{align*}
and $\{0, \frac{1}{3}, \frac{2}{3}, 1\}\in C_1$, $\{\frac{1}{3}, \frac{2}{3}\}\in M_{C_1}.$

\item[(iv)] There's no way to get $0$ or $1$ for $M_{C_1}$. 
\end{itemize}

Finally, 
\begin{align*}
M_{C_1} = (\cup_{i=1}^3 P^i_{C_1}) \cup \left\{\frac{1}{3}, \frac{2}{3}\right\} = (0,1).  
\end{align*}

\noindent {\bf{Claim 2:}} $M_{C_i} = (0,1)$ for all $i$. \\ 
Proof of Claim 2: Let $\{P^k_{C_i}\}$ be the $i$th Cantor partition set of $[0,1]$. Assume when $i=n$, $M_{C_n} = (0,1).$
\begin{itemize}
\item[(a)] $M_{C_i} \supset \left(0, \frac{1}{3}\right) \cup \left(\frac{1}{3}, \frac{2}{3}\right)$ for all $i$.\\
Since $M_{C_n} = (0,1),$ then $M_{C_n} \supset \left(0, \frac{1}{3}\right) \cup \left(\frac{1}{3}, \frac{2}{3}\right)$. When $i=n+1$, since 
\begin{align*}
C_{n+1} = \frac{1}{3}C_n \cup \left(\frac{2}{3} + \frac{1}{3} C_n\right),
\end{align*}
and by Lemma \ref{scalingproperty}, 
\begin{align*}
M_{\frac{1}{3}C_n} & = \frac{1}{3} M_{C_n} = \frac{1}{3}\left(0,1\right) = \left(0,\frac{1}{3}\right),\\
M_{\frac{2}{3}+\frac{1}{3}C_n} & = \frac{2}{3} + \frac{1}{3} M_{C_n} = \frac{2}{3} + \frac{1}{3}\left(0,1\right) = \frac{2}{3} + \left(0,\frac{1}{3}\right) = \left(\frac{2}{3},1\right),
\end{align*}

$M_{C_n+1} \supset \left(0, \frac{1}{3}\right) \cup \left(\frac{2}{3},1\right).$ Hence by induction $M_{C_i}\supset \left(0, \frac{1}{3}\right) \cup \left(\frac{1}{3}, \frac{2}{3}\right)$ for all $i$.\\


\item[(b)] $M_{C_{i}}\supset \{\frac{1}{3}, \frac{2}{3}\}$ for all $i$.\\
For each $i$, 
\begin{align*}
\frac{1}{3} = \frac{1}{2}\left(0+ \frac{2}{3}\right),\ \frac{2}{3} = \frac{1}{2}\left(\frac{1}{3} + 1\right),
\end{align*}
and $\{0, \frac{1}{3}, \frac{2}{3}, 1\} \subset C_i$, therefore $M_{C_{i}}\supset \{\frac{1}{3}, \frac{2}{3}\}$ for all $i.$

\item[(c)] $M_{C_{i}} \supset \left(\frac{1}{3},\frac{2}{3}\right)$ for all $i$.\\
Since $M_{C_n} = (0,1)$, then $M_{C_n} \supset \left(\frac{1}{3},\frac{2}{3}\right).$ For $i=n+1$, we will use the assumption $M_{C_n} = (0,1)$ and the following trick to show $M_{C_{n+1}} \supset \left(\frac{1}{3},\frac{2}{3}\right).$ \\

Take any $z\in \left(\frac{1}{3},\frac{2}{3}\right) = \left(\frac{3^n}{3^{n+1}}, \frac{2\cdot 3^{n}}{3^{n+1}}\right) = \cup_{k=3^{n}+1}^{2\cdot 3^n} P^k_{C_{n+1}},$ we want to show 
\begin{align}\label{averageincenterinterval}
z = \frac{1}{2} (a + b),
\end{align}
where $a\in (\cup_{k=1}^{3^n} P^k_{C_{n+1}}) \cap C_{n+1}$ and $b\in (\cup_{k=2\cdot 3^n +1}^{3^{n+1}} P^k_{C_{n+1}}) \cap C_{n+1}.$ Notice that since $z\in \left(\frac{1}{3}, \frac{2}{3}\right)$, there exists some $c\in \left(0,\frac{1}{3}\right),$ such that 

$$z = \frac{1}{3} +c = \frac{3^n}{3^{n+1}}+c.$$

Similarly since $C_{n+1} = \frac{1}{3}C_n \cup \left(\frac{2}{3} + \frac{1}{3}C_n\right)$, we know 

\begin{align*}
\cup_{k=2\cdot 3^n +1}^{3^{n+1}} P^k_{C_{n+1}}\cap C_{n+1} = \frac{2}{3} + (\cup_{k=1}^{3^n} P^k_{C_{n+1}}) \cap C_{n+1}.
\end{align*}

So there exists a $d\in \cup_{k=1}^{3^n} P^k_{C_{n+1}}\cap C_{n+1} \subset \left(0,\frac{1}{3}\right).$ such that $b = \frac{2}{3} + d.$
Therefore Eq. (\ref{averageincenterinterval}) is reduced to for any $c\in \left(0,\frac{1}{3}\right)$,  find $a,d\in (\cup_{k=1}^{3^n} P^k_{C_{n+1}}\cap C_{n+1})\subset  \left(0,\frac{1}{3}\right)$ such that
\begin{align*}
c = \frac{1}{2} (a+d).
\end{align*}

Note that $\cup_{k=1}^{3^n} P^k_{C_{n+1}}\cap C_{n+1} = (\cup_{k=1}^{3^{n+1}} P^k_{C_{n+1}}\cap C_{n+1}) \cap \left(0, \frac{1}{3}\right) = \frac{1}{3} C_n.$  And in (a), it is already shown that $M_{\frac{1}{3}C_n} = \left(0, \frac{1}{3}\right)$. So the existence for $a,d$ are guaranteed. Hence Eq. (\ref{averageincenterinterval}) is true and therefore $M_{C_{n+1}} \supset \left(\frac{1}{3},\frac{2}{3}\right)$ is true for all $i$.
\end{itemize}

Therefore $M_{C_i} = (0,1)$ for all $i$.\\

\noindent {\bf{Claim 3:}} $M_\C = \bigcap_{i=1}^\infty M_{C_i} = (0,1).$
\begin{itemize}
\item[(d)] One direction $M_\C \subset \bigcap_{i=1}^\infty M_{C_i} = (0,1)$ is trivial, since $\C \subset C_i$ for all $i$, then $M_\C \subset M_{C_i}$ for all $i$. Hence $M_\C \subset \bigcap_{i=1}^\infty M_{C_i} = (0,1).$

\item[(e)] For the other direction, pick any $z\in (0,1),$ that exists at least one pair $(x_n^k, y_n^k) \in C_n$ such that 
\begin{align*}
z = \frac{x_n^k + y_n^k}{2}. 
\end{align*} 

Define 
\begin{align*}
x_n^0 = \min\left\{x_n^k: z = \frac{x_n^k + y_n^k}{2}, x_n^k \in C_n\right\}.
\end{align*}

{\bf{Claim:}} $\{x_n^0\}$ converges to $x$ and $x\in \C.$

Proof of claim: First we will show $\{x^0_{n}\}$ is nondecreasing. Assume it is not true, then there exists an $m$ such that $x_{m+1}^0 \geq x_m^0.$ However, $x_{m+1}^0 \in C_{m+1} \subset C_m$, and 
\begin{align*}
z= \frac{x^0_{m+1} + y^0_{m+1}}{2}
\end{align*}
it contradicts with the construction of $x_m^0.$ Therefore $x^0_{n}$ is nondecreasing. Moreover, $0\leq x_n^0 <z$, by monotone convergence theorem, $\{x_n^0\}$ converges. Denote 
\begin{align*}
x = \lim_{m\rightarrow \infty} x_n^0.
\end{align*} 

For any $N$, $\{x_n\}_{n\geq N} \subset C_{N}$. Since $C_N$ is compact, $x^0_n\rightarrow x\in C_N$. Therefore $x \in \bigcap_{n=1}^\infty C_n = \C.$ Similarly, $\{y^0_n\}$ is nonincreasing since $y_n^0 = 2z - x_n^0$, and $z< y_n^0 <1.$ Again, by monotone convergence theorem and the same argument, $y_n^0 \rightarrow y\in \C.$ Hence $z = \frac{x+y}{2} \in M_C$. Therefore $(0,1)\subset M_C.$ 
\end{itemize}

Finally, let $B = \C$ and $S=M_C$, then for any point $x\in S$, $B$ contains a sphere around $x$, and $\mathcal{H}^1(S)=1$ but $\mathcal{H}^1(B)=0$. In fact, we can have countably many copies of $B$ such that $\mathcal{H}^1(B) = 0$ while $\mathcal{H}^1(S) = \infty.$ 
\end{proof}

\subsection{A geometric argument}
\begin{thm}
Suppose $C$, $C_i$ and $M_{C_i}$ are defined as above. Then $M_{C_i} = (0,1)$ for all integers $i \geq 1$.
\end{thm}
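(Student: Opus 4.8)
The plan is to give a second, more pictorial proof by induction on $i$, the base case $i=1$ being the computation $M_{C_1}=(0,1)$ already carried out in the previous subsection (Claim 1). The geometric input is the phenomenon behind Lemma \ref{indexesproperty}: averaging all points of one level-$i$ partition interval against all points of another yields (at least) the partition interval lying half-way between them. So if we picture $C_i$ as the union of its $2^i$ partition intervals $P^k_{C_i}$, then $M_{C_i}$ swallows every $P^s_{C_i}$ whose index $s$ is the average of two occupied indices. (This runs parallel to computing $\tfrac12(C_i+C_i)$ as a Minkowski sum of unions of intervals and watching the three scaled copies $\tfrac13(C_{i-1}+C_{i-1})$, $\tfrac23+\tfrac13(C_{i-1}+C_{i-1})$, $\tfrac43+\tfrac13(C_{i-1}+C_{i-1})$ tile $[0,2]$; I will phrase everything through indices since that is what Lemma \ref{indexesproperty} records.) Assume $M_{C_{i-1}}=(0,1)$.

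\textbf{Covering the partition intervals.} The occupied index set is $I_i=\{\,1+\sum_{j=0}^{i-1}a_j3^j:a_j\in\{0,2\}\,\}$, i.e.\ those $k$ for which $k-1$ written in base $3$ with $i$ digits uses only $0$ and $2$. For $m,n\in I_i$ we have $P^m_{C_i},P^n_{C_i}\subseteq C_i$, hence $M_{P^m_{C_i}\cup P^n_{C_i}}\subseteq M_{C_i}$, and Lemma \ref{indexesproperty} gives $P^s_{C_i}\subseteq M_{C_i}$ whenever $s=\tfrac12(m+n)\in\mathbb{Z}$. So it suffices to show every $s\in\{1,\dots,3^i\}$ is such an average. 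Writing $s=1+\sum_j c_j3^j$ and $m=1+\sum_j a_j3^j$, $n=1+\sum_j b_j3^j$, and noting that $\tfrac12(a_j+b_j)\le 2$ so that no carrying occurs, the equation $s=\tfrac12(m+n)$ reduces to $c_j=\tfrac12(a_j+b_j)$ for every $j$. As $a_j,b_j$ range over $\{0,2\}$ the quantity $\tfrac12(a_j+b_j)$ attains every value in $\{0,1,2\}$, so suitable $m,n\in I_i$ exist; a digit $c_j=1$ forces $a_j\neq b_j$ and hence $m\neq n$ (which is exactly the hypothesis of Lemma \ref{indexesproperty} after reordering so that $m<s<n$), while if all $c_j\in\{0,2\}$ we may take $m=n=s$ and use the trivial inclusion $P^s_{C_i}\subseteq M_{P^s_{C_i}}$ valid for any open interval. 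Therefore $M_{C_i}\supseteq\bigcup_{s=1}^{3^i}P^s_{C_i}=(0,1)\setminus D_i$, where $D_i:=\{\,k/3^i:1\le k\le 3^i-1\,\}$.

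\textbf{Covering the endpoints, and the reverse inclusion.} For the finitely many partition endpoints I would fall back on self-similarity and Lemma \ref{scalingproperty}, exactly as in parts (a)--(c) of Claim 2. Given $p=k/3^i\in D_i$: if $p\in(0,\tfrac13)$ then $p=\tfrac13(3p)$ with $3p\in(0,1)=M_{C_{i-1}}$, so $p\in\tfrac13 M_{C_{i-1}}=M_{\frac13C_{i-1}}\subseteq M_{C_i}$; the case $p\in(\tfrac23,1)$ is the mirror image through the right-hand copy; $p=\tfrac13=\tfrac12(0+\tfrac23)$ and $p=\tfrac23=\tfrac12(\tfrac13+1)$ are immediate from $\{0,\tfrac13,\tfrac23,1\}\subseteq C_i$; and if $p\in(\tfrac13,\tfrac23)$ write $p=\tfrac13+c$ with $c\in(0,\tfrac13)=M_{\frac13C_{i-1}}$, choose $a\neq d$ in $\tfrac13C_{i-1}$ with $c=\tfrac12(a+d)$, and set $x=a$, $y=\tfrac23+d$; then $x,y\in C_i$, $x\neq y$ (since $x\le\tfrac13<\tfrac23\le y$), and $\tfrac12(x+y)=p$. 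Thus $M_{C_i}\supseteq(0,1)$. Conversely $C_i\subseteq[0,1]$ forces $M_{C_i}\subseteq[0,1]$, and $0,1$ are excluded because $0=\tfrac12(x+y)$ with $x,y\in[0,1]$ only if $x=y=0$, contradicting $x\neq y$ (and likewise for $1$). Hence $M_{C_i}=(0,1)$.

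I expect the genuine content to be the middle step: verifying that averaging index pairs whose base-$3$ digits lie in $\{0,2\}$ realizes every index in $\{1,\dots,3^i\}$, \emph{and} that this can always be arranged with $m\neq n$ whenever Lemma \ref{indexesproperty} is actually invoked. The endpoint clean-up, by contrast, is routine, but it does genuinely require the inductive hypothesis $M_{C_{i-1}}=(0,1)$ together with the scaling lemma rather than Lemma \ref{indexesproperty} alone, so it cannot simply be absorbed into the first step.
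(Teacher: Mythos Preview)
Your argument is correct, but it is organized differently from the paper's geometric proof. In the paper, the inductive step treats the middle third $(1/3,2/3)$ by a partition--refinement trick: for each level-$n$ partition interval $\bar P$ lying in the middle third, the inductive hypothesis produces two level-$n$ intervals $P^k_{C_n},P^j_{C_n}\subset C_n$ whose midpoint set covers $\bar P$; one then cuts each of $P^k_{C_n},P^j_{C_n},\bar P$ into thirds and matches the surviving first/third sub-intervals (which lie in $C_{n+1}$) against the three sub-intervals of $\bar P$, covering the middle third at level $n+1$. You instead dispense with this refinement via a direct ternary--digit argument: writing $s-1$ in base $3$ with digits $c_j\in\{0,1,2\}$ and choosing $a_j,b_j\in\{0,2\}$ with $c_j=\tfrac12(a_j+b_j)$ places $P^s_{C_i}\subset M_{C_i}$ by Lemma~\ref{indexesproperty} with no appeal to the inductive hypothesis. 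This is essentially the classical proof that $\tfrac12(\mathcal C+\mathcal C)=[0,1]$, rephrased through indices, and it swallows all $3^i$ open partition intervals in one stroke; you only fall back on induction and Lemma~\ref{scalingproperty} to pick up the finitely many partition endpoints $k/3^i$. The paper's approach is more pictorial and keeps everything at the level of interval combinatorics; yours is more arithmetic and very nearly eliminates the induction---indeed, running the same digit trick on the \emph{closed} partition intervals (or directly on ternary expansions of points) would cover the endpoints as well and make the inductive hypothesis superfluous.
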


\begin{proof}
We will show that $M_{C_i} = (0,1)$ for all integers $i \geq 1$ by inducting on $i$. \\

\noindent$\mathbf{Base\ Case:}$ Let $i = 1$, and consider the partition set $P_i$. To show that $M_{C_1}$ contains $P_{C_1}^1$, notice that for any point $m \in P_{C_1}^1$, there exists an $r > 0$ dependent on $m$, such that $m - r$ and $m + r$ are contained in $P_{C_1}^1$. Similarly, $M_{C_1}$ contains $P_{C_1}^3$. \\

Now, to show that $P_{C_1}^2\cup\left\{\frac{1}{3}, \frac{2}{3}\right\} \subset M_{C_1}$ we have that for any point $m \in P_{C_1}^2$, $m + \frac{1}{3}$ and $m - \frac{1}{3}$ are both contained in $C_1$. Furthermore, $\frac{1}{3} = \frac{1}{2}\left(0+ \frac{2}{3}\right)$ and $\frac{2}{3} = \frac{1}{2}\left(\frac{1}{3} +1\right)$ in which case we have shown our base case.\\

\noindent$\mathbf{Induction:}$
By the inductive hypothesis, suppose the theorem holds for $i = n$ for some $n \in \mathbb{N}$. We will show that the theorem holds true for $i = n+1$. That is, we will show that $M_{C_{n+1}}$ contains $(0,1)$. \\

First consider the scaled version of $C_{n}$, i.e. $\frac{1}{3}C_{n}$. By the scaling property $\ref{scalingproperty}$, since $M_{C_{n}}$ contains $(0,1)$,  $M_{\frac{1}{3}C_{n}}$ contains $\left(0, \frac{1}{3}\right)$ and hence $M_{C_{n+1}\cap \left(0, \frac{1}{3}\right)} \subset M_{C_{n+1}}$ contains $\left(0,\frac{1}{3}\right)$. By the same scaling property, we get that that $M_{\frac{1}{3}C_{n} + \frac{2}{3}}$ contains $\left(\frac{2}{3}, 1\right)$ and hence, $M_{C_{n+1}}$ contains $\left(\frac{2}{3}, 1\right)$. \\

Lets take a quick break here to notice that $\frac{1}{3}$ and $\frac{2}{3}$ are both contained in $M_{C_{n+1}}$ because $0, \frac{1}{3}, \frac{2}{3}$, and $1$ are all in $C_{n+1}$ just as we showed in the base case. So in the following, we just need to argue that $M_{C_{n+1}}$ contains $\left(\frac{1}{3}, \frac{2}{3}\right)$.\\

Consider the partitions in $P_{n+1}\cap \left(\frac{1}{3}, \frac{2}{3}\right)$ and $P_{n}\cap (1/3, 2/3)$, which are exactly $\mathcal{P}_{n+1} = \{P_{C_{n+1}}^{3^{n} + 1}, P_{C_{n+1}}^{3^{n} + 2}, ..., P_{C_{n+1}}^{2\cdot 3^{n}}\}$, and $\mathcal{P}_{n} = \{P_{C_{n}}^{3^{n-1}+1}, ..., P_{C_{n}}^{2\cdot 3^{n-1}}\}$ respectively, see Fig \ref{partitionargument}.\\

Pick $\bar{P}_{C_{n}} \in \mathcal{P}_{n}$ and partition $\bar{P}_{C_{n}}$ into thirds to get $\{\bar{P}_{C_{n}}^1, \bar{P}_{C_{n}}^2, \bar{P}_{C_{n}}^3\}$. It is important to notice that $\bar{P}_{C_{n}}^m \in \mathcal{P}_{n+1}$ for each $m = 1, 2, 3$.\\

By the inductive hypothesis, there exists distinct partitions $P_{C_{n}}^k$ and $P_{C_{n}}^j$ contained in $C_{n}$ for which the midpoint set of $P_{C_{n}}^k\cup P_{C_{n}}^j$ contains $\bar{P}_{C_{n}}$. \\

As we did with $\bar{P}_{C_{n}}$, we may partition $P_{C_{n}}^k$ and $P_{C_{n}}^j$ into thirds, to get $\{P_{C_{n}}^{k_1}, P_{C_{n}}^{k_2}, P_{C_{n}}^{k_3}\}$ and $\{P_{C_{n}}^{j_1}, P_{C_{n}}^{j_2}, P_{C_{n}}^{j_3}\}$. However, only $\{P_{C_{n}}^{k_1}, P_{C_{n}}^{k_3}\}$ and $\{P_{C_{n}}^{j_1}, P_{C_{n}}^{j_3}\}$ are in $C_{n+1}$. 

\begin{figure}[H]
\centering
\includegraphics[width=\textwidth]{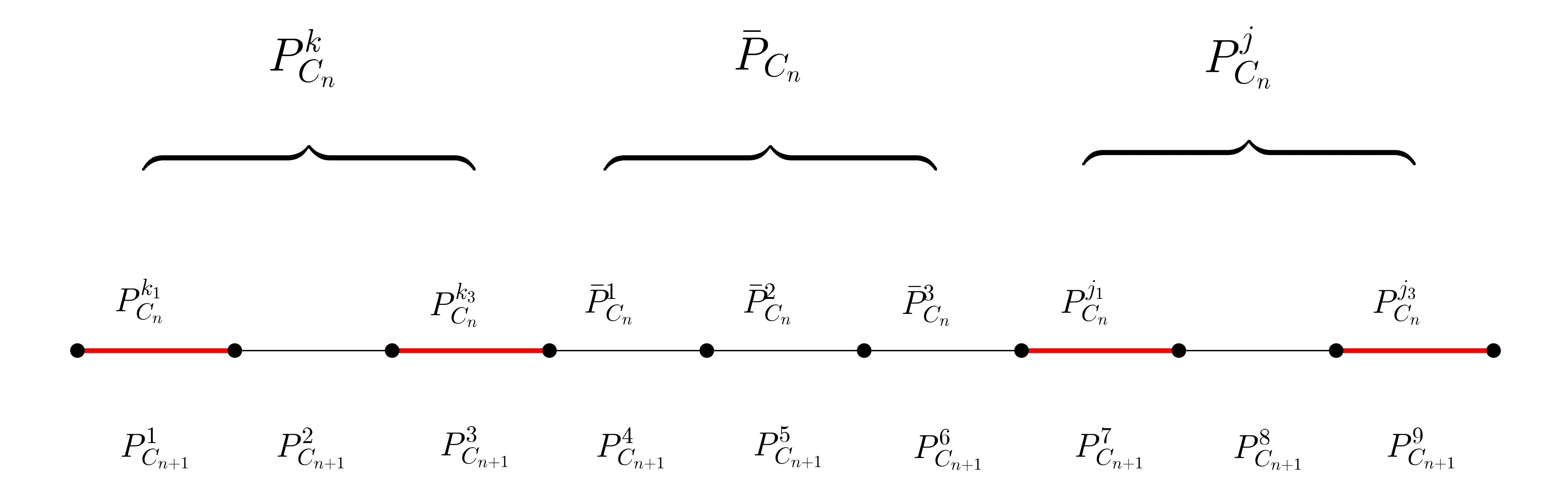}
\caption{For each $\bar{P}_{C_n}$, we go to the partitions $P_{C_n}^k$ and $P_{C_n}^j$ for which $M_{P_{C_n}^k\cup P_{C_n}^j}$ contains $\left[\frac{1}{3}, \frac{2}{3}\right]$. The first thirds of $P^k_{C_n}$ and $P_{C_n}^j$ gives us $\bar{P}^1_{C_n}$, and the last thirds of $P^k_{C_n}$ and $P_{C_n}^j$ gives us $\bar{P}^3_{C_n}$. To get the middle, $\bar{P}_{C_n}^2$, we use the first and last third of $P^k_{C_n}$ and $P_{C_n}^j$ respectively.}
\caption{Partition argument}
\label{partitionargument}
\end{figure}

Hence, we get  
\begin{align}\label{iteration}
\begin{split}
M_{P_{C_{n}}^{k_1}\cup P_{C_{n}}^{j_1}} &= \bar{P}_{C_{n}}^1\\
M_{P_{C_{n}}^{k_3}\cup P_{C_{n}}^{j_3}} &= \bar{P}_{C_{n}}^3\\
M_{P_{C_{n}}^{k_1}\cup P_{C_{n}}^{j_3}} &= \bar{P}_{C_{n}}^2.
\end{split}
\end{align}

Since this argument holds holds for arbitrary $\bar{P}_{C_{k-1}}$, we may therefore iterate through $\mathcal{P}_{n+1}$ by iterating through $\mathcal{P}_{n}$ and showing (\ref{iteration}); therefore showing that $\left[\frac{1}{3}, \frac{2}{3}\right] \subset M_{C_k}$. \\

By the principle of mathematical induction we have shown that $(0,1) \subset M_{C_i}$ for all $i \geq 1$.  
\end{proof}

\begin{thm}\label{compactness}
Suppose $\C$, $C_i$ and $M_{C_i}$ are defined as above. Then
\begin{align*}
\displaystyle\bigcap_{i=1}^\infty M_{C_i} &= M_\C.
\end{align*}
\end{thm}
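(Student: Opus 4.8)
The plan is to prove the two inclusions $M_{\C} \subseteq \bigcap_{i=1}^\infty M_{C_i}$ and $\bigcap_{i=1}^\infty M_{C_i} \subseteq M_{\C}$ separately. The first inclusion is immediate from monotonicity: since $\C \subseteq C_i$ for every $i$, we have $M_{\C} \subseteq M_{C_i}$ for every $i$, and hence $M_{\C} \subseteq \bigcap_{i=1}^\infty M_{C_i}$. This is the same trivial observation already used in part (d) of the analytic argument, so I would dispatch it in one line.

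For the reverse inclusion, I would take an arbitrary $z \in \bigcap_{i=1}^\infty M_{C_i}$ and reconstruct a representing pair inside $\C$ by a compactness/diagonal argument, exactly mirroring step (e) of the analytic proof. For each $i$, since $z \in M_{C_i}$, there is a pair $x_i, y_i \in C_i$ with $x_i \neq y_i$ and $z = \tfrac12(x_i + y_i)$. To extract a convergent subsequence with a clean limit, I would define $x_i^0 = \min\{x \in C_i : 2z - x \in C_i,\ x \neq z\}$ (the minimum exists because $C_i$ is compact and the constraint set is closed and nonempty), and set $y_i^0 = 2z - x_i^0$. As shown in the analytic argument, the nesting $C_{i+1} \subseteq C_i$ forces $\{x_i^0\}$ to be nondecreasing and bounded above by $z$, so it converges to some $x$; likewise $y_i^0 = 2z - x_i^0$ is nonincreasing and bounded below by $z$, converging to $y = 2z - x$. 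Then for each fixed $N$, the tail $\{x_i^0\}_{i \geq N}$ lies in the compact set $C_N$, so $x \in C_N$; since $N$ is arbitrary, $x \in \bigcap_N C_N = \C$, and similarly $y \in \C$. Finally $z = \tfrac12(x+y)$, and I would need to check $x \neq y$: if $x = y$ then $x = z$, but one can rule this out since $z \in M_{C_1} = (0,1)$ and a short argument shows the minimizing $x_i^0$ stays bounded away from $z$ (alternatively, note that if $z \notin \C$ the pair is automatically distinct, and if $z \in \C$ one can pick a different representing pair). Hence $z \in M_{\C}$.

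The main obstacle is the distinctness condition $x \neq y$ in the definition of $M_A$: the limiting pair could in principle collapse to $(z,z)$. I would handle this by observing that for each $i$, $x_i^0 < z$ strictly (since $x_i^0 \neq z$ and $x_i^0 \leq z$ forces $x_i^0 < z$), but strict inequality is not preserved in the limit, so a quantitative lower bound on $z - x_i^0$ is needed. One clean way: since $z \in M_{C_i} = (0,1)$ for all $i$ and $C_i$ contains $0$ and $1$, if $z \in \C$ then $\{x,y\} = \{0,2z\}$ or a similar explicit distinct pair works whenever $z \leq \tfrac12$ (and symmetrically), while if $z \notin \C$ then any representing pair in $\C$ must be distinct automatically. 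I would fold this case distinction into the final paragraph so that the argument is airtight.

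Since essentially all the analytic machinery is already present in step (e) of the earlier proof, the write-up here is mostly a matter of isolating that argument cleanly and stating it as the proof of Theorem~\ref{compactness}; no genuinely new estimate is required, only care with the endpoint/distinctness bookkeeping.
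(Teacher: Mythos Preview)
Your overall strategy is sound, but it is worth noting that you are not reproducing the paper's proof of Theorem~\ref{compactness}; you are reproducing the argument from part~(e) of the analytic proof in Section~2.2. The paper's own proof of Theorem~\ref{compactness} is different: it takes \emph{arbitrary} representing pairs $p_i=(x_i,y_i)\in C_i\times C_i$, applies Bolzano--Weierstrass in $\R^2$ to extract a convergent subsequence $p_{i_j}\to p=(x,y)$, and then argues by contradiction (using that $\C\times\C$ is closed and that the $C_i\times C_i$ are nested) that $p\in\C\times\C$. Your version instead selects the \emph{minimal} first coordinate $x_i^0$ at each stage, which makes $\{x_i^0\}$ monotone and lets you invoke the monotone convergence theorem rather than passing to a subsequence. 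Both routes are legitimate compactness arguments; yours is slightly more constructive and matches the earlier analytic section, while the paper's is shorter and avoids having to check that the minimum is attained and nondecreasing.

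One point where your proposal is more careful than the paper: you flag the possibility that the limiting pair collapses to $(z,z)$, which the paper's proof of Theorem~\ref{compactness} (and indeed part~(e) as well) silently ignores. However, your suggested repair is not correct as written. The claim that for $z\in\C$ with $z\le\tfrac12$ the pair $\{0,2z\}$ works requires $2z\in\C$, which fails already for $z=\tfrac14\in\C$ (since $\tfrac12\notin\C$). A clean fix is to force the two coordinates into different halves of the Cantor set at some scale: given $z\in(0,1)$, choose $n$ so that some interval of $C_n$ lying in $[0,z)$ and some interval of $C_n$ lying in $(z,1]$ both meet $C_n$ at distance at least $3^{-n}$ from $z$ (always possible since $z\in(0,1)$), and from then on pick $x_i$ in the left block and $y_i$ in the right block; then $y_i-x_i\ge 3^{-n}$ uniformly in $i$, and the limit pair is genuinely distinct. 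Alternatively, once one knows $\C+\C=[0,2]$, for $z\in(1/3,2/3)$ one can take $a\in\tfrac13\C$ and $b\in\tfrac23+\tfrac13\C$ with $a+b=2z$, so that $a\le\tfrac13<\tfrac23\le b$ forces $a\ne b$; self-similarity then propagates this to all of $(0,1)$.
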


\begin{proof}
To show $\cap_{i=1}^\infty M_{C_i} \subset M_\C$, we just need to notice that $\C$ being contained in all $C_i$, implies $M_\C$ is contained in all $M_{C_i}$. \\

To show that $\cap_{i=1}^\infty M_{C_i} \subset M_\C$, let $z \in \cap_{i=1}^\infty M_{C_i}$. For each $i = 1, 2, ...$ there exists $p_i := (x_i, y_i) \in C_i \times C_i$ such that $(x_i + y_i)/2 = z$. Now, for the bounded sequence $(p_i)$, there exists a subsequence $(p_{i_j})$ converging to $p := (x, y)$. To show that $p \in \C\times \C$, assume by way of contradiction that $p$ is in $\mathbb{R}^2\setminus \C\times \C$.\\

First notice that since $\C\times \C$ is closed, $\mathbb{R}^2\setminus \C\times \C$ is open, and hence there exists $\epsilon > 0$ for which the open ball $B(p, \epsilon) \subset (\mathbb{R}^2\setminus \C\times \C)$.\\

Now, since $\C\times \C = \cap_{i=1}^\infty (C_i\times C_i)$, and since $C_{i+1}\subset (C_i\times C_i)$ for all $i$, there exists $N_1 \in \mathbb{N}$ such that $\left(C_i\times C_i\right) \cap B(p, \epsilon) = \emptyset$ for all $i > N_1$. However, since $p_{i_j} \to p$ as $j\to \infty$, there exists $N_2 \in \mathbb{N}$ for which $p_{i_j} \in B(p, \epsilon)$ for all $j > N_2$. \\

Hence for any $j > \mathrm{max}\{N_1, N_2\}$, we have that $p_{i_j}$ is in both $C_{i_j}\times C_{i_j}$ and $B(p, \epsilon)$;  contradicting the fact that $(C_i\times C_i) \cap B(p, \epsilon) = \emptyset$ for all $i > N_1$. \\

We therefore get that, $p \in \C\times \C$, $(x + y)/2 = z$ and hence $z \in M_C$ thus concluding the proof.
\end{proof}

By the previous two theorems, we get that the midpoint set of the $1/3$-Cantor set is the interval $(0,1)$. We therefore get that the positive measure set, $S = M_{\mathcal{\C}} = (0,1)$ contains spheres centered about the zero measure set, $\mathcal{C}$.
\bibliographystyle{plain}
\bibliography{On_The_Measure_of_the_Midpoints_of_Cantor_Set_in_R.bib}

\end{document}